% Discrete Math, JAA
% TODO: dukaz mediality, dodelat intro, posledni sekce

\documentclass[11pt]{amsart}
\usepackage{amsthm,amssymb,url,hyperref}
\usepackage{fullpage}

\theoremstyle{plain}
\newtheorem{theorem}{Theorem}[section]

\newtheorem{lemma}[theorem]{Lemma}

\newtheorem{proposition}[theorem]{Proposition}

\theoremstyle{definition}
\newtheorem{definition}[theorem]{Definition}
\newtheorem{problem}[theorem]{Problem}
\newtheorem{example}[theorem]{Example}

\def\aut#1{{\mathrm{Aut}(#1)}}
\def\aff#1{{\mathrm{Aff}(#1)}}
\def\Aff{\mathrm{Aff}}
\def\Z{\mathbb Z}

\title{Non-affine latin quandles of order $2^k$}

\author{Tom\'a\v s Nagy}

\address{Department of Algebra, Faculty of Mathematics and Physics, Charles University, Prague, Czech Republic}
%\address{Institute of Discrete Mathematics and Geometry, Faculty of Mathematics and Geoinformation, Vienna University of Technology, Vienna, Austria}
\email{tomas.nagy@email.com}

\begin{document}

\thanks{Research partly supported by the  GA\v CR grant 18-20123S and by the project SVV-2017-260456}

\keywords{Latin quandle, left distributive quasigroup, enumeration, central extension.}

\subjclass{20N05, 05A15, 57M27}
%\date{\today}

\begin{abstract}
We prove that a non-affine latin quandle (also known as left distributive quasigroup) of order $2^k$ exists if and only if $k=6$ or $k\geq8$. The construction is expressed in terms of central extensions of affine quandles.
\end{abstract}

\maketitle

\section{Introduction}\label{sec:intro}

Latin quandles, also known as left distributive quasigroups, appear in many contexts, including invariants of knots, geometry of symmetric spaces, set-theoretic solutions to the quantum Yang-Baxter equation, or the abstract theory of quasigroups and loops. We refer to \cite{EN,Sta-latin} for an overview of motivations and results on quandles in general, and latin quandles in particular. Many examples and facts mentioned in the introduction are explained in \cite{Sta-latin}.

One source of quandles comes from abelian groups: given an abelian group $A=(A,+)$ and its automorphism $\psi$, set $x*y = (1-\psi)(x)+\psi(y)$. The resulting structure $(A,*)$ is a quandle, called \emph{affine}. It is latin if and only if $1-\psi$ is bijective. Affine quandles were enumerated by Hou \cite{Hou}, providing explicit formulas for orders $p^k$ with $k\leq4$. All latin quandles of order $p$ and $p^2$ are affine \cite{EGS,G}.

A less obvious source of quandles comes from left Bol loops. Given a left Bol loop $L=(L,\cdot)$, set $x*y=x(y^{-1}x)$. The resulting structure is a quandle, called the \emph{core} of $L$. It is latin if and only if $L$ is uniquely 2-divisible (in particular, if it has odd order). The smallest example has order 15, and there is one of order $pq$ ($p>q$ odd primes) if and only if $q\mid p^2-1$ \cite{KNV}. The core of a uniquely 2-divisible left Bol loop is isotopic to the loop itself and therefore cannot be isotopic to any abelian group (especially, it cannot be affine) as loop isotopes of abelian groups are always abelian groups \cite[Corollary III.2.3]{P}. Hence, there are non-affine latin quandles of order $pq$ with $q\mid p^2-1$.

One of the famous Belousov problems \cite{Bel} asked if there is a latin quandle which is not isotopic to a left Bol loop (in particular, it must be non-affine). 
The first counterexample was found by Onoi in 1970 \cite{Onoi}; its order is $2^{16}$. Subsequently, Galkin developed a representation theory for latin quandles over transitive groups, which allowed to settle many problems \cite{Gal}. For example, he proved that the smallest latin quandle not isotopic to a left Bol loop has order 15, and that every smaller latin quandle is affine over an abelian group.

In recent years, there has been a considerable effort in enumeration of quandles in general \cite{AR,JPSZ,VY}, and connected and latin quandles in particular \cite{BB,B???,EGS,G,HSV}.

\begin{problem}
Determine all $n$ such that there exists a non-affine latin quandle of order $n$.
\end{problem}

The following table summarizes the current state of the problem ($k,n\in\mathbb N$, $p,q$ odd primes).

\medskip
\begin{center}\begin{tabular}{|l|l|l|l|}
\hline
order & exists iff & their number & reference \\\hline
$2^k$ & $k=6$ or $k\geq8$ & & Theorem \ref{t:main} \\
$p^k$ & $k\geq3$ & $2p^2-p+2$ of order $p^3$ & \cite{BB,EGS,G} \\
$pq$, $p>q$ & $q\mid p^2-1$ & 2 & \cite{B???} \\
$4n+2$ & (none) & & \cite{Ste} \\
$4p$ & $p\equiv 1\pmod 3$ & 2 & \cite{B???} \\
\hline
\end{tabular}\end{center}
\medskip

Until now, it was unknown what is the smallest $k$ such that there exists a non-affine latin quandle of order $2^k$.
A computer search over the library of transitive groups, based on Galkin's ideas, quickly reveals that $k\geq6$ \cite{HSV}, and Onoi proved that $k\leq16$ \cite{Onoi}. Elaborating Onoi's ideas in the setting of central extensions \cite{BS}, we construct non-affine latin quandles of order $2^k$ for every even $k\geq6$. We also outline how to set a computer search that proves that there are none of order $2^7$. The main theorem and the outline of its proof is stated below.

\begin{theorem} \label{t:main} 
A non-affine latin quandle of order $2^k$ exists if and only $k=6$ or $k\geq8$.
\end{theorem}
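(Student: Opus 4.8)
The statement is an equivalence, so I split the argument into a construction (the ``if'' direction) and a non-existence result (the ``only if'' direction). Throughout I rely on the standard characterization that a connected quandle---in particular any latin quandle---is affine if and only if it is medial, equivalently if and only if its displacement group $\mathrm{Dis}(Q)=\langle L_xL_y^{-1} : x,y\in Q\rangle$ is abelian; this reduces every ``non-affine'' claim to exhibiting a \emph{non-abelian} displacement group. I also use that a latin quandle is principal, i.e. $\mathrm{Dis}(Q)$ acts regularly, so for order $2^k$ the displacement group is a $2$-group of order $2^k$.

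For the ``if'' direction I would first treat even $k\geq 6$ by the central-extension method of \cite{BS}, elaborating Onoi. Fix an affine latin base quandle $Q_0$ over an abelian $2$-group and an abelian fibre $A$, both of order $2^{k/2}$ (whence $k$ is even), and put a quandle structure on $Q_0\times A$ of the form $(a,s)*(b,t)=\bigl(a*b,\ \phi(a,b)(t)+\theta(a,b)\bigr)$, where $\phi$ records the linear action inherited from the base and $\theta\colon Q_0\times Q_0\to A$ is a quandle $2$-cocycle. The plan is to choose $\theta$ so that (i) the extension is latin, which I arrange by keeping $t\mapsto \phi(a,b)(t)+\theta(a,b)$ bijective for every fixed first coordinate, so that latinness reduces to latinness of $Q_0$; and (ii) the extension $E$ is non-affine, which by the criterion above I verify by showing $\mathrm{Dis}(E)$ is non-abelian---concretely the commutator of two displacement generators lands nontrivially in the central fibre precisely when $\theta$ fails the symmetry (``abelianness'') condition. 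Writing down an explicit such $\theta$ that is simultaneously latin and non-symmetric is the technical heart of the construction, and the $2$-group arithmetic here is where the lower bound $k\geq 6$ becomes forced.

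To pass from even $k$ to all $k\geq 8$ I would use direct products. Let $N$ be the non-affine latin quandle of order $2^6$ just constructed and let $M$ be an affine latin quandle of order $2^{k-6}$; such $M$ exist for every exponent $m=k-6\geq 2$, e.g. over $\F_2^{m}$ with an automorphism $\psi$ having no eigenvalue $1$, so that $1-\psi=1+\psi$ is invertible. For odd $k\geq 9$ we have $m\geq 3$, hence $M$ exists and $N\times M$ is latin of order $2^k$ (left and right translations act coordinatewise). Since $L_{(a,b)}L_{(c,b)}^{-1}=(L_aL_c^{-1},\mathrm{id})$, the subgroup $\mathrm{Dis}(N)\times\{\mathrm{id}\}$ lies in $\mathrm{Dis}(N\times M)$, which is therefore non-abelian; so $N\times M$ is non-affine. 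Together with the even cases this covers every $k\geq 8$ (and $k=6$). Note that $m=1$ is genuinely unavailable since there is no latin quandle of order $2$, which is exactly why a base of order $2^6$ rather than a smaller even power must be used.

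For the ``only if'' direction I must show every latin quandle of order $2^k$ with $k\leq 5$ or $k=7$ is affine. Using principality I coordinatize such a quandle as $x*y=xf(x^{-1}y)$ on a group $G=\mathrm{Dis}(Q)$ of order $2^k$, with $f\in\aut{G}$ subject to the quandle envelope conditions and the latin condition that $g\mapsto g^{-1}f(g)$ be bijective; here affineness is equivalent to $G$ being abelian. For $k\leq 5$ the groups of order at most $32$ are few and the check is routine via Galkin's representation and the transitive-groups library, as observed in \cite{HSV}. The case $k=7$ is the genuine obstacle: one must rule out every non-abelian group of order $128$ as the displacement group of a latin quandle. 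I would organize the search by fixing $G$ non-abelian of order $128$, ranging over admissible pairs $(f,\,\text{envelope data})$, and verifying that every resulting quasigroup fails left distributivity. The difficulty is purely computational---cutting the enormous space of order-$128$ groups down to a feasible size using the envelope constraints---and this, rather than any conceptual gap, is where the outline's care is needed.
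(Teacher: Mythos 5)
Your high-level architecture matches the paper's --- explicit non-affine examples at small exponents, direct products with affine latin quandles to reach every $k\ge 8$ (your displacement-group justification that non-affine $\times$ affine is non-affine is a valid alternative to the paper's Toyoda--Bruck argument), and a computer search for non-existence --- but both halves have genuine gaps. In the existence direction you never produce the cocycle: you explicitly defer ``writing down an explicit such $\theta$ that is simultaneously latin and non-symmetric'' as the technical heart, and that heart \emph{is} the theorem. The paper supplies it via \emph{Onoi rings} $(O,+,\cdot,\alpha)$ (characteristic $2$, $\alpha^2+\alpha+1=0$, $\alpha(a)\cdot b=a\cdot\alpha(b)$) and trilinear \emph{Onoi mappings} $\mu$, setting $\theta_{a,b}=\mu(a,a+b,a+b)$; Lemma \ref{l:ext_quandle} shows this always satisfies (LD), Lemma \ref{l:ext_medial} shows the extension is non-affine exactly when $\mu$ violates $\mu(a,b,c)=\mu(a,c,b)$ or $\mu(a,b,b)=\mu(b,a,a)$, and Examples \ref{ex:2^4k} and \ref{ex:64} exhibit such $\mu$ at orders $2^{8}$ and $2^{6}$. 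Without some such explicit datum your argument establishes nothing, in particular not the crucial case $k=6$. (Also, as written, your extension formula $(a,s)*(b,t)=(a*b,\,\phi(a,b)(t)+\theta(a,b))$ drops $s$ from the second coordinate and cannot define a quasigroup.)

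The non-existence direction has a more serious flaw: the coordinatization $x*y=xf(x^{-1}y)$ on $G=\mathrm{Dis}(Q)$ of order $2^k$ presupposes that latin quandles are principal, i.e.\ that $\mathrm{Dis}(Q)$ acts regularly. That is false for latin quandles in general (the displacement group is transitive but need not be regular --- cores of non-abelian uniquely $2$-divisible groups already fail this), so the premise of your $k=7$ search is unjustified. Even granting it, a search over all $2328$ groups of order $128$ and their automorphisms, or over transitive groups of degree $128$ (which are not classified in any library), is neither feasible nor actually specified. The paper avoids this by invoking the nilpotence theorem of \cite{BS} (Proposition \ref{p:nilpotence}): every latin quandle of order $2^{7}$ is a central extension $F\times_{1-\psi,\psi,\theta}A$ with $F$ a known latin quandle of order $2^{7-l}$ and $A$ an admissible abelian group of order $2^{l}$, $2\le l\le 5$, so the cocycles form a subgroup of $A^{F^{2}}$ computable by linear algebra, and since condition (M) is linear in $\theta$ it suffices to check it on a generating set of $\mathrm Z_{LD}(F,A,\psi)$. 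That reduction is the missing idea that makes the $k=7$ case actually decidable.
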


\begin{proof}
See Example \ref{ex:2^4k} for orders $2^{4k}$, $k\geq2$, Example \ref{ex:64} for orders $2^{6k}$, $k\geq1$, and use the direct product with the 4-element affine latin quandle or with one of the 8-element ones to obtain the remaining sizes. (The product of an affine and non-affine latin quandle is non-affine, as follows from the Toyoda-Bruck theorem \cite[Theorem 3.1]{Sta-latin} which expresses affineness by an identity.)

For non-existence, use the results of \cite{BS} (summarized in Proposition \ref{p:nilpotence}) to show that every non-affine latin quandle can be represented by a central extension $Q\times_{1-\psi,\psi,\theta} A$, and set a computer search over all parameters $Q,A,\psi,\theta$ to show that all central extensions of order $2^7$ are affine (see Section \ref{s:computation}).
\end{proof}

\section{Preliminaries}

A \emph{latin quandle} $(Q,*)$ is a quasigroup in which all left translations are automorphisms.
The former property says that both equations $a*x=b$ and $y*a=b$ have a unique soution for every $a,b\in Q$. The latter property can be expressed as an identity, called \emph{left self-distributivity}: 
\[ x*(y*z)=(x*y)*(x*z).\] 
A quasigroup is called \emph{medial}, if it satisfies the identity 
\[ (x*y)*(u*v)=(x*u)*(y*v).\]

Let $A=(A,+,-,0)$ be an abelian group and $\psi\in\aut A$ such that $1-\psi$ is bijective (it is indeed a homomorphism). We define a new operation on the set $A$ by
\[ x*y = (1-\psi)(x)+\psi(y). \]

Then $(A,*)$ is a medial latin quandle. Such quandles will be called \emph{affine} over the group $A$, and denoted $\aff{A,\psi}$. The Toyoda-Bruck theorem \cite[Theorem 3.1]{Sta-latin} states that a quasigroup is medial if and only if it is affine in a somewhat broader sense (namely, a quasigroup $(Q, \ast)$ is called affine over an abelian group $(Q, +)$ if there exist automorphisms $\phi$, $\psi$ of $(Q, +)$ with $\phi\psi=\psi\phi$ and $u \in Q$ such that $x \ast y = \phi(x)+\psi(y)+u$ for all $x,y\in Q$). In particular, for latin quandles, mediality and affineness are equivalent properties.

Let $Q$ be a quasigroup, $A$ an abelian group, $\phi,\psi\in\aut A$, and consider a mapping $\theta: Q\times Q \to A$, $(a,b)\mapsto\theta_{a,b}$, called a \emph{cocycle}.
We define an operation on the set $Q\times A$ by
\[ (a,s)*(b,t)=(a*b,\phi(s)+\psi(t)+\theta_{a,b}), \]
for every $a,b\in Q$ and $s,t\in A$. The resulting quasigroup
\[ Q\times_{\phi,\psi,\theta} A=(Q\times A,*) \] 
is called a \emph{central extension} of $Q$ over the triple $(\phi,\psi,\theta)$.

The mapping $Q\times_{\phi,\psi,\theta} A \to Q$, $(a,s)\mapsto a$, is a homomorphism, called \emph{canonical projection}.
For any fixed $e\in Q$ such that $e*e=e$, the mapping $\Aff(A,\phi,\psi)\to Q\times_{\phi, \psi, \theta} A$, $a\mapsto(e,a)$, is a homomorphism, called \emph{canonical injection} over $e$.

\begin{lemma}\label{l:ext}
Let $Q$ be a latin quandle, $A$ an abelian group, $\phi,\psi\in\aut A$ and $\theta:Q\times Q\to A$. Then the central extension $E=Q\times_{\phi,\psi,\theta} A$ is a latin quandle if and only if
$\varphi+\psi=1$, $\theta_{a,a}=0$, and 
\[ \psi(\theta_{b,c})+\theta_{a,b*c}  = \psi(\theta_{a,c})+\phi(\theta_{a,b})+\theta_{a*b,a*c} \tag{LD} \]
for every $a,b,c\in Q$. The extension $E$ is medial if and only if, additionally,
\[ \phi(\theta_{a, b}) + \psi(\theta_{c, d}) + \theta_{a \ast b, c \ast d} = \phi(\theta_{a, c}) + \psi(\theta_{b, d}) + \theta_{a \ast c, b \ast d} \tag{M} \]
for every $a,b,c,d\in Q$. 
\end{lemma}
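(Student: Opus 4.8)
The plan is to verify the defining identities of $E$ directly, exploiting that the operation on $E=Q\times_{\phi,\psi,\theta}A$ splits: the first coordinate is governed entirely by $Q$, while the second is the affine expression $\phi(s)+\psi(t)+\theta_{a,b}$. Thus every identity imposed on $E$ decomposes into a first-coordinate statement about $Q$ and a second-coordinate identity in $A$, the latter separating further into linear conditions on $\phi,\psi$ and a condition on the cocycle $\theta$.

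First I would dispose of the quasigroup axiom. Because $Q$ is a quasigroup and $\phi,\psi\in\aut{A}$, every translation of $E$ is invertible: one inverts the corresponding translation of $Q$ on the first coordinate and applies $\psi^{-1}$ (for a left translation) or $\phi^{-1}$ (for a right translation) on the second. So $E$ is automatically a quasigroup, and checking that $E$ is a latin quandle reduces to checking idempotency and left self-distributivity. For idempotency, $(a,s)*(a,s)=(a,(\phi+\psi)(s)+\theta_{a,a})$ since $a*a=a$; comparing with $(a,s)$ forces $(\phi+\psi)(s)+\theta_{a,a}=s$ for all $a,s$, whence $\theta_{a,a}=0$ (take $s=0$) and then $\phi+\psi=1$.

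Next I would expand $x*(y*z)=(x*y)*(x*z)$ for $x=(a,s)$, $y=(b,t)$, $z=(c,u)$. The first coordinates agree because $Q$ is left self-distributive. In the second coordinate, collecting the $A$-variables, the coefficient of $s$ gives $\phi=(\phi+\psi)\phi$, i.e. $\phi+\psi=1$ after cancelling the invertible $\phi$; the coefficients of $t$ are $\psi\phi$ versus $\phi\psi$, which agree once $\phi+\psi=1$ (then $\psi=1-\phi$ commutes with $\phi$); the coefficient of $u$ matches trivially; and the surviving $\theta$-terms are exactly (LD). I would note that setting $a=b=c$ in (LD) and using $a*a=a$ yields $\phi(\theta_{a,a})=0$, so $\theta_{a,a}=0$ is in fact recovered from (LD), confirming the three conditions are consistent. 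Conversely, granting $\phi+\psi=1$, $\theta_{a,a}=0$ and (LD), all coordinatewise equalities hold, so $E$ is an idempotent, left self-distributive quasigroup, i.e. a latin quandle.

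For the mediality clause I would expand $(x*y)*(u*v)=(x*u)*(y*v)$ in the same way. The first coordinates yield $(a*b)*(c*d)=(a*c)*(b*d)$, that is, mediality of $Q$; this is unavoidable, since the canonical projection $E\to Q$ is a surjective homomorphism and mediality is an identity. In the second coordinate the linear terms in $s,t,u,v$ cancel because $\phi,\psi$ commute (again from $\phi+\psi=1$), leaving precisely (M). Hence, over the latin quandle conditions, $E$ is medial exactly when (M) holds. I expect no conceptual obstacle here: the computations are routine, and the only real care is the bookkeeping of the nested products and the composite maps $\phi^2,\psi^2,\phi\psi,\psi\phi$, together with the repeated observation that every linear constraint collapses once $\phi+\psi=1$ is in force, isolating (LD) and (M) as the genuine cocycle conditions.
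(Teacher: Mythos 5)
Your proof is correct and follows essentially the same route as the paper: expand the idempotency, left-distributivity and mediality identities coordinatewise, observe that the first coordinate is inherited from $Q$, and extract $\phi+\psi=1$, $\theta_{a,a}=0$, (LD) and (M) from the second coordinate. You are in fact slightly more careful than the paper's own argument, since you explicitly verify the quasigroup property of $E$ and note that the first coordinate of the mediality identity forces $Q$ itself to be medial.
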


\begin{proof}
The extension $E$ is a quandle if and only if for all $a, b, c \in Q, x,y,z\in A$:
\[ (a,x)\ast((b,y)\ast(c,z)=((a,x)\ast(b,y))\ast((a,x)\ast(c,z)) \tag{ld} \]
\[ (a,x) \ast(a,x)=(a,x) \tag{i} \]
Rewrite this equations using the definition of central extension and consider only the second coordinate.

Put $x=0$ and from (i) get $\theta_{a,a}=0$. Then, from (i) again, $\phi + \psi = 1$. Put $x=y=z=0$ in (ld) and get (LD).

$E$ is medial if and only if, additionaly, for all $a, b, c, d \in Q, w,x,y,z\in A$
\[ ((a,w)\ast(b,x))\ast((c,y)\ast(d,z))=((a,w)\ast(c,y))\ast((b,x)\ast(d,z)) \tag{m} \]

Rewrite (m) using the definition of $E$, put $w=x=y=z=0$ and get (M) in the second coordinate.

The other implications can be obtained straightforwardly by rewriting (m), (ld) and (i) using the definition of central extension and the equalities (LD), (M), $\theta_{a,a}=0$ and $\phi +\psi = 1$. 
\end{proof}

Let $Q$ be a latin quandle, $A$ an abelian group and $\psi\in\aut A$ such that $\phi=1-\psi$ is bijective. Cocycles satisfying (LD) form a subgroup of the direct power $A^{Q^2}$, to be denoted $\mathrm Z_{LD}(Q,A,\psi)$.

\begin{lemma}\label{lemma01:3}
Let $Q \times_{\phi, \psi, \theta} A$ be a central extension and consider $\alpha \in \aut A$. Then $Q \times_{\phi, \psi, \theta} A$ is isomorphic to $Q \times_{\alpha \phi \alpha^{-1}, \alpha \psi \alpha^{-1}, \alpha \theta} A$.
\end{lemma}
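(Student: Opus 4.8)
The plan is to exhibit an explicit isomorphism between the two extensions, both of which live on the same underlying set $Q\times A$. Since $\alpha$ acts only on the fiber $A$ and leaves the base quasigroup $Q$ untouched, the natural candidate is the map $F\colon Q\times A\to Q\times A$ given by $F(a,s)=(a,\alpha(s))$. First I would observe that $F$ is a bijection: its inverse is $(a,s)\mapsto(a,\alpha^{-1}(s))$, which is well defined precisely because $\alpha\in\aut A$.

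The substance is checking that $F$ is a homomorphism from $E=Q\times_{\phi,\psi,\theta}A$ to $E'=Q\times_{\alpha\phi\alpha^{-1},\,\alpha\psi\alpha^{-1},\,\alpha\theta}A$. Writing $\star$ for the operation of $E'$, I would compute both sides. Applying $F$ after multiplying in $E$ and using that $\alpha$ is an additive homomorphism gives
\[ F\bigl((a,s)*(b,t)\bigr) = \bigl(a*b,\ \alpha\phi(s)+\alpha\psi(t)+\alpha(\theta_{a,b})\bigr). \]
Multiplying the images in $E'$ and using that the conjugations cancel, namely $\alpha\phi\alpha^{-1}(\alpha(s))=\alpha\phi(s)$ and likewise for $\psi$, gives
\[ F(a,s)\star F(b,t) = \bigl(a*b,\ \alpha\phi(s)+\alpha\psi(t)+(\alpha\theta)_{a,b}\bigr). \]
Since $(\alpha\theta)_{a,b}=\alpha(\theta_{a,b})$ by definition, the two expressions agree in both coordinates, so $F$ is a homomorphism and hence an isomorphism.

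There is no genuine obstacle here: the only thing one must get right is the bookkeeping of how conjugation by $\alpha$ on the structure automorphisms interacts with the relabeling by $\alpha$ on the fibers, which is exactly what the choice $F(a,s)=(a,\alpha(s))$ is designed to balance. Conceptually, conjugating the constants $(\phi,\psi,\theta)$ by $\alpha$ is simply the effect of renaming the fiber coordinate through $\alpha$, so the isomorphism is forced. Notably, I would not need to invoke Lemma \ref{l:ext} or any latin/quandle axiom: the claim is purely that the two raw central-extension operations correspond under $F$, independently of whether either extension happens to be a quandle.
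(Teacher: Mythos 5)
Your proof is correct and uses exactly the same isomorphism $(a,s)\mapsto(a,\alpha(s))$ that the paper's proof declares to be "straightforward to check"; you have simply written out the verification the paper omits. No further comment is needed.
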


\begin{proof}
It is straightforward to check that $(a, x)\mapsto (a, \alpha(x))$ is an isomorphism.
\end{proof}

Central extensions will be used for our constructions, and also for our non-existence arguments, due to the following fact.

\begin{proposition}\label{p:nilpotence}
Let $Q$ be a~latin quandle of prime power size. Then $Q \simeq F \times_{1-\psi, \psi, \theta} A$ for some latin quandle~$F$ with $|F| < |Q|$, an abelian group~$A$, $\psi\in\aut A$ and $\theta\in\mathrm Z_{LD}(F,A,\psi)$.
\end{proposition}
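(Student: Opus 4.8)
The plan is to realize the desired decomposition as the central extension attached to a nonzero central congruence, whose existence is supplied by nilpotence. Throughout I work with the commutator theory of \cite{BS}, in which a congruence $\alpha$ of a quandle is called \emph{central} when $[\alpha,1_Q]=0_Q$, and $\zeta_Q$ denotes the largest such congruence (the centre). First I would note that a latin quandle is connected, since its left translations already act transitively on $Q$. By the structural results of \cite{BS}, a connected quandle of prime power order is nilpotent, so its upper central series climbs from $0_Q$ up to $1_Q$; as $|Q|=p^k>1$ this forces $\zeta_Q\neq 0_Q$. I would then fix a nonzero central congruence $\alpha\leq\zeta_Q$ (one may take $\alpha=\zeta_Q$, or a minimal nonzero one) and set $F=Q/\alpha$. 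Quotients of latin quandles are latin, and $|F|<|Q|$ because $\alpha$ is nonzero.

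The heart of the argument is the coordinatization of $Q$ over $F$. Since $Q$ is connected, its left translations permute the blocks of $\alpha$ transitively, so all blocks have the same size and are pairwise isomorphic; centrality of $\alpha$ equips each block with the structure of an abelian group $A$ and makes the induced operations affine. Concretely, I would choose a section $\sigma\colon F\to Q$ of the canonical projection, use it to identify $Q$ with $F\times A$, and then invoke the coordinatization of central congruences from \cite{BS}: centrality guarantees that the second coordinate of a product is an affine function of the two input second coordinates, with linear parts independent of the blocks. This produces automorphisms $\phi,\psi\in\aut A$ and a map $\theta\colon F\times F\to A$ with
\[ (a,s)*(b,t)=(a*b,\ \phi(s)+\psi(t)+\theta_{a,b}), \]
that is, an isomorphism $Q\simeq F\times_{\phi,\psi,\theta}A$. (Lemma \ref{lemma01:3} may be used to normalise the chosen identification of the blocks if convenient.)

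It remains to match the parameters to the claimed form. Since $Q$ is a latin quandle, so is the extension $F\times_{\phi,\psi,\theta}A$, and Lemma \ref{l:ext} then yields exactly $\phi+\psi=1$, $\theta_{a,a}=0$, and the cocycle identity (LD); hence $\phi=1-\psi$ and $\theta\in\mathrm Z_{LD}(F,A,\psi)$. Latin-ness also forces $1-\psi$ to be bijective, which is precisely what is needed for the membership $\theta\in\mathrm Z_{LD}(F,A,\psi)$ to be meaningful. I expect the main obstacle to be the coordinatization step: converting the abstract statement ``$\alpha$ is central'' into a single pair of automorphisms $(\phi,\psi)$ together with a cocycle $\theta$ valid simultaneously across all blocks is exactly where the full strength of the abelianness/centrality machinery of \cite{BS} is required. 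The nilpotence input, though deep, enters only as a black box producing a nonzero central congruence.
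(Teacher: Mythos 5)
Your proposal is correct and follows essentially the same route as the paper: both obtain a nonzero central congruence from the nilpotence result of \cite{BS} for (connected) quandles of prime power order, set $F=Q/\alpha$, and then invoke the coordinatization of central congruences from \cite{BS} to realise $Q$ as a central extension, with Lemma \ref{l:ext} pinning down $\phi=1-\psi$ and $\theta\in\mathrm Z_{LD}(F,A,\psi)$. The paper simply cites \cite{BS} (Corollary 6.6 for nilpotence and Proposition 7.8 for the coordinatization) where you sketch the block-by-block identification, so the extra detail you supply is exactly what that cited proposition black-boxes.
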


\begin{proof}
According to~\cite[Corollary 6.6]{BS}, $Q$ is nilpotent, hence there exists a~chain of congruences $0_Q=\alpha_0 < \alpha_1 < \ldots < \alpha_n = 1_Q$ such that $\alpha_{i + 1} / \alpha_i$ is central in $Q/\alpha_i$. Put $F=Q/\alpha_1$. Since $\alpha_1$ is central in $Q$, we have $Q \simeq F \times_{1-\psi, \psi, \theta} A$ for some $A,\psi,\theta$ by~\cite[Proposition 7.8]{BS}.
\end{proof}

\section{Constructions}

\begin{definition}
An algebraic structure $O=(O,+,\cdot,\alpha)$ is called \emph{Onoi ring} if $(O,+,\cdot)$ is a ring (not necessarily associative) such that $a+a=0$ for every $a$, and $\alpha$ is an automorphism of this ring such that 
\[ \alpha^2(a) + \alpha(a) + a = 0 \quad \text{ and } \quad \alpha(a) \cdot b = a \cdot \alpha(b) \]
for every $a, b \in O$.

From $a+a=0$ and $\alpha^2+\alpha+1=0$ it follows that $1-\alpha=\alpha^2$. Hence, the operation 
\[ a*b=\alpha^2(a)+\alpha(b)=(1-\alpha)(a)+\alpha(b)\] 
yields an affine latin quandle, to be denoted $\aff{O}$.
\end{definition}

It is easy to see that $\alpha^3 = 1$ and that the only fixed point of $\alpha$ is 0. Hence, $\alpha$ can be written as a composition of independent 3-cycles and therefore $3\mid(|O|-1)$.
Since $(O,+)$ is an elementary abelian 2-group, finite Onoi rings have $2^{2k}$, $k\geq0$, elements.

\begin{example}
Let $(O,+)$ be an elementary abelian 2-group, $\alpha$ its automorphism satisfying $\alpha^2+\alpha+1=0$ and set $a\cdot b=0$ for every $a,b$. Then $(O,+,\cdot,\alpha)$ is an Onoi ring, called the \emph{zero Onoi ring}.
\end{example}

\begin{example}\label{ex:4}
There are four Onoi rings on four elements, up to isomorphism. Without loss of generality, let $O=\{0,1,2,3\}$ and let $\alpha$ be the 3-cycle $(1\ 2\ 3)$. There are four ways to define a compatible multiplication:
\begin{itemize}
	\item the zero ring, 
	\item the multiplication given by the three element latin quandle on $\{1,2,3\}$ (this is the example that was used by Onoi in his original construction \cite{Onoi}),
	\item the other two examples result from the previous one by a cyclic shift of the rows in the multiplication table.
\end{itemize}
The tables of the operations are below: 

\smallskip
\begin{center}
\begin{tabular}{r||c|c|c|c|}
	+ & 0 & 1 & 2 & 3 \\
	\hline\hline
	0 & 0 & 1 & 2 & 3 \\ 
	\hline
	1 & 1 & 0 & 3 & 2 \\ 
	\hline
	2 & 2 & 3 & 0 & 1 \\ 
	\hline
	3 & 3 & 2 & 1 & 0 \\ 
	\hline
	\end{tabular}
	\qquad
	\begin{tabular}{r||c|c|c|c|}
	$\cdot_1$ & 0 & 1 & 2 & 3 \\
	\hline\hline
	0 & 0 & 0 & 0 & 0 \\ 
	\hline
	1 & 0 & 1 & 3 & 2 \\ 
	\hline
	2 & 0 & 3 & 2 & 1 \\ 
	\hline
	3 & 0 & 2 & 1 & 3 \\ 
	\hline
	\end{tabular}
\end{center}
\smallskip
\begin{center}
	\begin{tabular}{r||c|c|c|c|}
	$\cdot_2$ & 0 & 1 & 2 & 3 \\
	\hline\hline
	0 & 0 & 0 & 0 & 0 \\ 
	\hline
	1 & 0 & 3 & 2 & 1 \\ 
	\hline
	2 & 0 & 2 & 1 & 3 \\ 
	\hline
	3 & 0 & 1 & 3 & 2 \\ 
	\hline
	\end{tabular}
	\qquad
	\begin{tabular}{r||c|c|c|c|}
	$\cdot_3$ & 0 & 1 & 2 & 3 \\
	\hline\hline
	0 & 0 & 0 & 0 & 0 \\ 
	\hline
	1 & 0 & 2 & 1 & 3 \\ 
	\hline
	2 & 0 & 1 & 3 & 2 \\ 
	\hline
	3 & 0 & 3 & 2 & 1 \\ 
	\hline
	\end{tabular}
\end{center}
\smallskip
\end{example}

\begin{example}
Let $O$ be an Onoi ring and $\sigma\in S_n$ a permutation. We define $O^\sigma = (O^n, +_n, -_n, 0_n, \cdot_{\sigma}, \alpha_n)$ where $+_n,-_n,0_n,\alpha_n$ are defined coordinate-wise and  
\[ (a_1, a_2, \ldots, a_n) \cdot_{\sigma} (b_1, b_2, \ldots, b_n) = (a_{\sigma(1)} \cdot b_1, a_{\sigma(2)} \cdot b_2, \ldots, a_{\sigma(n)} \cdot b_n).\]
It is straightforward to check that $O^\sigma$ is an Onoi ring.
\end{example}

\begin{example}
Let $O$ be an Onoi ring and $\sigma\in S_{n\times n}$ a permutation. We define $M_n^{\sigma}(O)$ to be the ring of $n\times n$ matrices over $O$ with standard addition, 
$\alpha$ applied element-wise, and  
\[ (a_{i,j})\cdot_\sigma (b_{i,j})=(\sum_{k=1}^n a_{\sigma(i,k)} \cdot b_{k,j})_{i,j}.\]
It is straightforward to check that $M_n^\sigma(O)$ is an Onoi ring.
(Onoi \cite{Onoi} used this construction for $n=2$ over his 4-element Onoi ring.)
\end{example}

\begin{definition}
Let $O_1,O_2$ be two Onoi rings. A mapping $\mu:O_1^3\to O_2$ is called \emph{an Onoi mapping} between $O_1$ and $O_2$ if it is trilinear (with respect to the additive 2-group) and the following three identities hold:
\begin{align*}
\mu\circ(\alpha_1\times\alpha_1\times\alpha_1)&=\alpha_2\circ\mu,\tag{OM1}\\
\mu\circ(\alpha_1\times1\times1)&=\mu\circ(1\times\alpha_1\times\alpha_1),\tag{OM2}\\ 
\mu\circ(1\times\alpha_1\times1)&=\mu\circ(1\times1\times\alpha_1). \tag{OM3}
\end{align*}
\end{definition}

\begin{example}
If $O$ is an Onoi ring, then $\mu(a,b,c)=a(bc)$ is an Onoi mapping $O^3\to O$. This will be called the \emph{canonical Onoi mapping} for $O$.
\end{example}

\begin{lemma}\label{l:ext_quandle}
Let $O_1,O_2$ be Onoi rings and $\mu:O_1^3\to O_2$ an Onoi mapping. For $a,b\in O_1$, define $\theta_{a,b} = \mu(a, a + b, a + b)$. Then $\theta\in\mathrm Z_{LD}(\aff{O_1},O_2,\psi)$. 
\end{lemma}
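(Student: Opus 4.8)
The plan is to verify directly the cocycle identity (LD) that defines membership in $\mathrm Z_{LD}$ (with $\phi=1-\psi$). Here $\psi=\alpha_2$, so $\phi=1-\alpha_2=\alpha_2^2$, and the operation of $\aff{O_1}$ is $a*b=\alpha_1^2(a)+\alpha_1(b)$, using $1-\alpha_1=\alpha_1^2$. One also records for later use (it is needed for the extension to be a latin quandle via Lemma~\ref{l:ext}) that trilinearity gives $\theta_{a,a}=\mu(a,a+a,a+a)=\mu(a,0,0)=0$. Thus the whole content of the statement lies in (LD).

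For (LD) I would first clear every occurrence of $\alpha_2$ using (OM1). Applying (OM1) once and twice yields $\psi(\theta_{x,y})=\alpha_2\,\mu(x,x+y,x+y)=\mu(\alpha_1 x,\alpha_1(x+y),\alpha_1(x+y))$ and $\phi(\theta_{x,y})=\alpha_2^2\,\mu(x,x+y,x+y)=\mu(\alpha_1^2 x,\alpha_1^2(x+y),\alpha_1^2(x+y))$, so that each term of (LD) becomes a value of $\mu$ with no $\alpha_2$ present. I would also record the characteristic-$2$ simplification $(a*b)+(a*c)=\alpha_1(b+c)$, which turns $\theta_{a*b,a*c}$ into $\mu(\alpha_1^2 a+\alpha_1 b,\ \alpha_1(b+c),\ \alpha_1(b+c))$. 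Expanding all four $\theta$-terms by trilinearity then produces, on each side, a sum of more than a dozen monomials $\mu(\alpha_1^i x,\alpha_1^j y,\alpha_1^k z)$ with $x,y,z\in\{a,b,c\}$.

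The normalisation step is the heart of the argument. Using (OM2) in the form $\mu(\alpha_1^i x,y,z)=\mu(x,\alpha_1^i y,\alpha_1^i z)$, and then (OM3) to slide all exponents onto the third argument, each monomial reduces \emph{exactly} (with no $\alpha_2$ introduced) to $\mu(x,y,\alpha_1^{\,2i+j+k}z)$, the exponent read modulo $3$ since $\alpha_1^3=1$; thus the three independent $\alpha_1$-exponents collapse to a single one. The special case $\mu(\alpha_1 x,\alpha_1 y,\alpha_1 z)=\alpha_2\,\mu(x,y,z)$ of (OM1) reduces under the same rule to $\mu(x,y,\alpha_1 z)=\alpha_2\,\mu(x,y,z)$, whence $\mu(x,y,\alpha_1^e z)=\alpha_2^e\,\mu(x,y,z)$. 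Collecting every monomial by its base $\mu(x,y,z)$, both sides of (LD) become $O_2$-linear combinations of the same base monomials, and I would check that the coefficients agree. The only non-trivial cancellations are of the shape $1+\alpha_2+\alpha_2^2$ (occurring for the bases $\mu(a,a,a)$, $\mu(a,b,b)$, $\mu(a,c,c)$), and these vanish precisely by the Onoi-ring relation $\alpha_2^2+\alpha_2+1=0$ in $O_2$; every other base cancels trivially in characteristic $2$.

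The main obstacle is purely organisational: each side expands into over a dozen monomials, and one must track the $\alpha_1$-exponents carefully through (OM2) and (OM3) before comparing. Conceptually, the computation makes transparent why the three axioms are chosen as they are: (OM2) and (OM3) are exactly what let the three exponents collapse into one, while (OM1) together with $\alpha_2^2+\alpha_2+1=0$ is exactly what forces the residual coefficients coming from the affine action to sum to zero. Once the normal form $\mu(x,y,\alpha_1^e z)=\alpha_2^e\,\mu(x,y,z)$ is established, no individual step is genuinely difficult.
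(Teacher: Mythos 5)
Your proposal is correct and follows essentially the same route as the paper: direct verification of (LD), expansion by trilinearity, normalization of every monomial using (OM1)--(OM3), and cancellation via characteristic $2$ together with $\alpha^2+\alpha+1=0$. The only cosmetic difference is that you convert the residual $\alpha_1$-exponents into powers of $\alpha_2$ through $\mu(x,y,\alpha_1 z)=\alpha_2\mu(x,y,z)$ and cancel $1+\alpha_2+\alpha_2^2$ acting on $O_2$, whereas the paper keeps them in the third argument and cancels $\alpha_1^2x+\alpha_1x+x=0$ inside $O_1$; the two are equivalent by linearity of $\mu$.
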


\begin{proof}
We will verify the quandle cocycle conditions from Lemma~\ref{l:ext}. To simplify notation, we shall omit the index of $\alpha$ (which is always clear from the context), and we shall write $\alpha a$ instead of $\alpha(a)$ in this proof.

Clearly, $\theta_{a,a}=0$ for all $a \in O_1$ and $\alpha^2+\alpha=1$.
We verify the condition (LD). First, put all terms on one side, and rewrite the cocycle values in terms of the Onoi mapping:
\begin{gather*} 
\alpha\mu(b,b+c,b+c) + \mu(a,a+\alpha^2b+\alpha c,a+\alpha^2 b+\alpha c) \\ 
+ \alpha\mu(a,a+c,a+c) +\alpha^2\mu(a,a+b,a+b)+\mu(\alpha^2 a+\alpha b,\alpha b+\alpha c,\alpha b+\alpha c). 
\end{gather*}
We shall prove that the sum equals 0 for all $a,b,c\in O_1$.

Using linearity in the first coordinate of the last term, we isolate $\mu(\alpha b,\alpha b+\alpha c,\alpha b+\alpha c)=\alpha\mu(b,b+c,b+c)$ and cancel it with the first term.

Using (OM1) we get $\alpha\mu(a,a+c,a+c)=\mu(\alpha a,\alpha (a+c),\alpha (a+c))$. Using (OM2) and linearity of $\alpha$, this equals to $\mu(a,\alpha^2 a+\alpha^2 c,\alpha^2 a+\alpha^2 c)$.

Similarly, using the identities (OM1) and (OM2), we rewrite all the remaining terms to the form $\mu(a,\_,\_)$:
\begin{gather*} 
\mu(a,a+\alpha^2b+\alpha c,a+\alpha^2b+\alpha c) \\ + \mu(a,\alpha^2a+\alpha^2c,\alpha^2 a+\alpha^2 c) 
+\mu(a,\alpha a+\alpha b,\alpha a+\alpha b)+\mu(a,b+c,b+c). 
\end{gather*}
Now, using trilinearity, we expand all terms of the sum so that there is no addition in the arguments of $\mu$. Also, using (OM3), we can shift the $\alpha$ mapping to the last coordinate. The results is the sum of the following terms:
\begin{gather*}
\mu(a,a,a), \mu(a,a,\alpha^2b), \mu(a,a,\alpha c), \mu(a,b,\alpha^2a), \mu(a,b,\alpha b), \\
\mu(a,b,c), \mu(a,c,\alpha a), \mu(a,c,b), \mu(a,c,\alpha^2c), \\ 
\mu(a,a,\alpha a), \mu(a,a,\alpha c), \mu(a,c,\alpha a), \mu(a,c,\alpha c),\\
\mu(a,a,\alpha^2 a), \mu(a,a,\alpha^2 b), \mu(a,b,\alpha^2 a), \mu(a,b,\alpha^2b), \\ 
\mu(a,b,b), \mu(a,b,c), \mu(a,c,b), \mu(a,c,c).
\end{gather*}
Using linearity in the third coordinate and the rules $x+x=0$ and $\alpha^2 x+\alpha x+x=0$, we see that all terms cancel out, thus the cocycle identity holds.
\end{proof}

Hence, the resulting central extension is a latin quandle, to be denoted
\[\mathcal{Q}(O_1,O_2, \mu)=\Aff(O_1) \times_{\alpha_2^2, \alpha_2, \theta} (O_2,+).\]

\begin{lemma}\label{l:ext_medial}
Let $O_1,O_2$ be Onoi rings and $\mu:O_1^3\to O_2$ an \emph{Onoi mapping}. Then the latin quandle $\mathcal{Q}(O_1,O_2, \mu)$ is affine if and only if the following two identities hold for all $a, b, c \in O_1$:
\begin{align*}
\mu(a, b, b) &= \mu(b, a, a), \tag{$\mu$1} \\
\mu(a, b, c) &= \mu(a, c, b). \tag{$\mu$2}
\end{align*}
\end{lemma}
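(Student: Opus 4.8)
The plan is to reduce the statement to the medial cocycle identity and then analyze it by direct computation. By the Toyoda--Bruck theorem \cite[Theorem 3.1]{Sta-latin} a latin quandle is affine if and only if it is medial; since $\mathcal{Q}(O_1,O_2,\mu)$ is already a latin quandle (so the identity (LD) holds by Lemma~\ref{l:ext_quandle}), Lemma~\ref{l:ext} tells us it is affine exactly when the medial identity (M) holds for $\phi=\alpha_2^2$, $\psi=\alpha_2$, the cocycle $\theta_{x,y}=\mu(x,x+y,x+y)$, and the operation $x*y=\alpha_1^2 x+\alpha_1 y$ of $\Aff(O_1)$. Thus I only have to show that (M) is equivalent to the conjunction of $(\mu1)$ and $(\mu2)$. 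Throughout I would write $\alpha$ for $\alpha_1$ inside $\mu$ and use (OM1) to replace an outer $\alpha_2$ or $\alpha_2^2$ by $\alpha$ applied to all three slots, exactly as in Lemma~\ref{l:ext_quandle}.

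For the implication $(\mu1)\wedge(\mu2)\Rightarrow$ (M) I would first simplify the cocycle. Under $(\mu2)$ the two cross terms in $\mu(x,x+y,x+y)=\mu(x,x,x)+\mu(x,x,y)+\mu(x,y,x)+\mu(x,y,y)$ coincide and cancel in characteristic $2$, so $\theta_{x,y}=\mu(x,x,x)+g(x,y)$ where $g(x,y):=\mu(x,y,y)$. The same cancellation shows that $(\mu2)$ makes $g$ biadditive, while $(\mu1)$ makes it symmetric; moreover (OM1) gives $g(\alpha x,\alpha y)=\alpha_2 g(x,y)$ and (OM2) gives $g(\alpha x,y)=g(x,\alpha y)$. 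Substituting $\theta_{x,y}=g(x,x)+g(x,y)$ into (M) and expanding each $\theta_{a*b,c*d}=g(a*b,a*b)+g(a*b,c*d)$ by bilinearity, the diagonal parts contribute $\alpha_2^2 g(a,a)+\alpha_2 g(b,b)$ and symmetrically on the other side, and almost all monomials match slot-for-slot; the only nontrivial matching is $g(\alpha b,\alpha^2 c)=g(\alpha c,\alpha^2 b)$, which follows by combining symmetry of $g$ with $g(\alpha x,y)=g(x,\alpha y)$. This establishes (M), and I expect this direction to be clean.

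For the converse $(M)\Rightarrow(\mu1)\wedge(\mu2)$ I would specialize the four free variables. Setting $a=c$ merely reproduces (LD) (a quick check), so the genuinely new content lives elsewhere; I would instead kill two of the variables, for instance put $a=d=0$, use $\theta_{0,x}=0$ and $0*x=\alpha x$, and then strip the remaining $\alpha$'s using the invertibility of $\alpha$ together with (OM1)--(OM3). The resulting low-degree identities in two and three free variables should collapse---after applying $x+x=0$ and $\alpha^2+\alpha+1=0$---to $(\mu1)$ and $(\mu2)$ respectively. Concretely, I would aim to isolate from suitable substitutions the defect expressions $\mu(a,b,b)+\mu(b,a,a)$ and $\mu(a,b,c)+\mu(a,c,b)$ and conclude that each vanishes identically.

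The main obstacle is the bookkeeping in the converse: the single identity (M) packages both ``quadratic'' conditions at once, and the difficulty is choosing specializations that disentangle them, so that one substitution yields $(\mu1)$ with no residual $(\mu2)$-terms and another yields $(\mu2)$ alone. As in Lemma~\ref{l:ext_quandle}, everything hinges on applying (OM1)--(OM3), the relation $\alpha^2+\alpha+1=0$, and characteristic $2$ in the right order; the forward direction is comparatively routine once the cocycle has been rewritten through the symmetric bilinear form $g$.
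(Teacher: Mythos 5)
Your reduction to the medial identity (M) via Toyoda--Bruck and Lemma~\ref{l:ext} is exactly the paper's starting point, and your forward direction is correct and in fact cleaner than the paper's: under ($\mu$2) the cross terms of $\mu(x,x+y,x+y)$ cancel, so $\theta_{x,y}=g(x,x)+g(x,y)$ with $g(x,y)=\mu(x,y,y)$ biadditive, and ($\mu$1) makes $g$ symmetric; together with $g(\alpha x,\alpha y)=\alpha_2 g(x,y)$ and $g(\alpha x,y)=g(x,\alpha y)$ this verifies (M) by a short slot-for-slot matching (the one nontrivial pair $g(\alpha b,\alpha^2c)=g(\alpha c,\alpha^2 b)$ checks out, both sides equalling $g(b,c)$). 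The paper instead expands $\Theta(a,b,c,d)$ in full and cancels; your route through the bilinear form $g$ buys a conceptually tidier argument for this half.

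The converse, however, is a genuine gap: you state the plan but do not carry it out, and the single specialization you name ($a=d=0$) does not deliver either identity. Working it through, $a=d=0$ yields (after cancelling the diagonal cubes and stripping $\alpha_2$ via (OM1))
\begin{equation*}
\mu(b,b,\alpha c)+\mu(b,\alpha c,b)+\mu(\alpha b,c,c)=\mu(c,c,\alpha b)+\mu(c,\alpha b,c)+\mu(\alpha c,b,b),
\end{equation*}
a single two-variable identity in which the would-be ($\mu$1)-defect and ($\mu$2)-defect are entangled; neither $\mu(a,b,b)+\mu(b,a,a)=0$ nor $\mu(a,b,c)+\mu(a,c,b)=0$ follows from it alone. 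This disentangling is precisely where the work lies, and the paper needs a chain of four carefully chosen substitutions: $\Theta'(x,y,0,y)=\Theta'(x,0,y,y)$ and $\Theta'(0,x,y,y)=\Theta'(0,y,x,y)$ combine to give the auxiliary identity $\mu(x,y,x)=\mu(x,x,y)$, which then yields ($\mu$1); only after using these to reduce $\Theta'$ to a simpler $\Theta''$ do the further substitutions $\Theta''(z,x,y,0)=\Theta''(z,y,x,0)$ and $\Theta''(x,y,0,z)=\Theta''(x,0,y,z)$, combined with (OM2)--(OM3) and $\alpha^2+\alpha+1=0$, produce ($\mu$2). Without exhibiting such a sequence (or an equivalent one), the implication (M) $\Rightarrow$ ($\mu$1) $\wedge$ ($\mu$2) remains unproved.
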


\begin{proof}
We will show that the quandle cocycle condition (M) from Lemma~\ref{l:ext} is equivalent to ($\mu$1) and ($\mu$2). We shall use the same simplification of notation as in the previous proof. Let
\[\Theta(a,b,c,d)=\phi(\theta_{a, b}) + \psi(\theta_{c, d}) + \theta_{a \ast b, c \ast d}\]
denote the left hand side of the identity (M), which then states that $\Theta(a,b,c,d)=\Theta(a,c,b,d)$ for every $a,b,c,d\in O_1$.
Rewriting the cocycle values in terms of the Onoi mapping, we obtain that 
\begin{gather*} 
\Theta(a,b,c,d)=\alpha^2\mu(a,a+b,a+b) + \alpha\mu(c,c+d,c+d) \\ +\mu(\alpha^2a+\alpha b,\alpha^2a+\alpha b+\alpha^2c+\alpha d,\alpha^2a+\alpha b+\alpha^2c+\alpha d), 
\end{gather*}
and using the identities (OM1) and (OM2) and linearity in the first coordinate, it equals
\begin{gather*} 
\Theta(a,b,c,d)=\mu(a,\alpha a+\alpha b,\alpha a+\alpha b) + \mu(c,\alpha^2c+\alpha^2d,\alpha^2c+\alpha^2d) \\ +\mu(a,\alpha a+b+\alpha c+d,\alpha a+b+\alpha c+d) \\ +\mu(b,a+\alpha^2b+c+\alpha^2d,a+\alpha^2b+c+\alpha^2d).
\end{gather*}
Finally, we use linearity in the second and third coordinates to separate variables $a,c$ and $b,d$ in the last two terms, obtaining
\begin{gather*} 
\Theta(a,b,c,d)=\mu(a,\alpha a+\alpha b,\alpha a+\alpha b) + \mu(c,\alpha^2c+\alpha^2d,\alpha^2c+\alpha^2d) \\
+ \mu(a,\alpha a+\alpha c,\alpha a+\alpha c) + \mu(a,\alpha a+\alpha c,b+d) + \mu(a,b+d,\alpha a+\alpha c) \\ + \mu(a,b+d,b+d) + \mu(b,a+c,a+c) + \mu(b,a+c,\alpha^2b+\alpha^2d) \\ + \mu(b,\alpha^2b+\alpha^2d,a+c) +\mu(b,\alpha^2b+\alpha^2d,\alpha^2b+\alpha^2d).
\end{gather*}
Expanding both sides of the identity $\Theta(a,b,c,d)=\Theta(a,c,b,d)$, we see that many terms cancel out, and it becomes equivalent to $\Theta'(a,b,c,d)=\Theta'(a,c,b,d)$, where
\begin{gather*} 
\Theta'(a,b,c,d)= \mu(a,\alpha a+\alpha c,b+d) + \mu(a,b+d,\alpha a+\alpha c) + \mu(a,b+d,b+d) \\
+ \mu(b,a+c,a+c) + \mu(b,a+c,\alpha^2b+\alpha^2d) + \mu(b,\alpha^2b+\alpha^2d,a+c).
\end{gather*}

$(\Leftarrow)$ 
We shall prove that $\Theta'(a,b,c,d)=\Theta'(a,c,b,d)$ for every $a,b,c,d$. Using ($\mu$2), this equality is equivalent to
\[ \mu(a,b+d,b+d)+\mu(b,a+c,a+c)=\mu(a,c+d,c+d)+\mu(c,a+b,a+b).\]
Using linearity and ($\mu$2) again, we see that \[ \mu(x,y+z,y+z)=\mu(x,y,y)+\mu(x,z,z)\] for every $x,y,z$, hence some terms cancel out and the equality is equivalent to
\[ \mu(a,b,b)+\mu(b,a,a)+\mu(b,c,c)=\mu(a,c,c)+\mu(c,a,a)+\mu(c,b,b),\] 
which immediately follows from ($\mu$1).

$(\Rightarrow)$ 
Assume that $\Theta'(a,b,c,d)=\Theta'(a,c,b,d)$ for every $a,b,c,d$.
Writing $\Theta'(x,y,0,y)=\Theta'(x,0,y,y)$, we obtain
\[ \mu(y,x,x)=\mu(x,\alpha x+\alpha y,y)+\mu(x,y,\alpha x+\alpha y)+\mu(x,y,y).\]
Now, using linearity of $\mu$ and the property (OM3), we obtain
\[ \mu(y,x,x)=\mu(x,\alpha x,y)+\mu(x,y,\alpha x)+\mu(x,y,y) \tag{aux1}\]
for every $x,y$.
Similarly, writing $\Theta'(0,x,y,y)=\Theta'(0,y,x,y)$, we obtain
\[ \mu(x,y,y)+\mu(x,y,\alpha^2x)+\mu(x,\alpha^2x,y)=\mu(y,x,x) \tag{aux2}\]
for every $x,y$.
From (aux1) and (aux2), we see that 
\[ \mu(x,y,\alpha^2x)+\mu(x,y,\alpha x)=\mu(x,\alpha^2x,y)+\mu(x,\alpha x,y)\]
for every $x,y$. Using linearity of $\mu$ and the rule $\alpha^2x+\alpha x+x=0$, we see that 
\[ \mu(x,y,x)=\mu(x,x,y) \tag{aux3} \] 
for every $x,y$.
In particular, using (OM3), we have $\mu(x,\alpha x,y)+\mu(x,y,\alpha x)=\mu(x,x,\alpha y)+\mu(x,\alpha y,x)=0$, and (aux1) simplifies to ($\mu$1).

Now, using (aux3) and (OM3), $\mu(a,\alpha a,b+d)=\mu(a,a,\alpha(b+d))=\mu(a,\alpha(b+d),a)=\mu(a,b+d,\alpha a)$. Similarly, $\mu(b,a+c,\alpha^2b)=\mu(a,\alpha^2b,a+c)$ and, using ($\mu$1), $\mu(a,b,b)=\mu(b,a,a)$. Hence, using linearity of $\mu$, 
\begin{gather*} 
\Theta'(a,b,c,d)=\mu(a,\alpha c,b)+\mu(a,\alpha c,d) + \mu(a,b,\alpha c)+\mu(a,d,\alpha c)+ \mu(a,b,d)\\
+\mu(a,d,b)+\mu(a,d,d)+ \mu(b,a,c)+\mu(b,c,a)+\mu(b,c,c)+ \mu(b,a,\alpha^2d)\\
+\mu(b,c,\alpha^2d) + \mu(b,\alpha^2d,a)+\mu(b,\alpha^2d,c).
\end{gather*}
Using (OM3) and ($\mu$1), some terms in the identity $\Theta'(a,b,c,d)=\Theta'(a,c,b,d)$ cancel out, and it becomes equivalent to $\Theta''(a,b,c,d)=\Theta''(a,c,b,d)$, where
\begin{gather*} 
\Theta''(a,b,c,d)= \mu(a,\alpha c,d)+\mu(a,d,\alpha c)+\mu(a,b,d)+\mu(a,d,b)+\mu(b,a,c) \\+\mu(b,c,a)+\mu(b,a,\alpha^2d)+\mu(b,c,\alpha^2d)+\mu(b,\alpha^2d,a)+\mu(b,\alpha^2d,c).
\end{gather*}
Writing $\Theta''(z,x,y,0)=\Theta''(z,y,x,0)$, we obtain
\[ \mu(x,z,y)+\mu(x,y,z)=\mu(y,z,x)+\mu(y,x,z) \tag{aux4}\]  for every $x,y,z$.
Finally, writing the equality $\Theta''(x,y,0,z)=\Theta''(x,0,y,z)$, we obtain
\begin{gather*} 
\mu(x,y,z)+\mu(x,z,y)+\mu(y,x,\alpha^2z)+\mu(y,\alpha^2z,x)=\mu(x,\alpha y,z)+\mu(x,z,\alpha y)
\end{gather*}
for every $x,y,z$. Using (aux4), (OM2) and (OM3) successively, we see that
\begin{gather*} 
\mu(x,\alpha y,z)+\mu(x,z,\alpha y)=\mu(\alpha y,x,z)+\mu(\alpha y,z,x)\\
=\mu(y,\alpha x,\alpha z)+\mu(y,\alpha z,\alpha x)=\mu(y,x,\alpha^2z)+\mu(y,\alpha^2z,x)
\end{gather*}
for every $x,y,z$. Apply this identity and what remains in the equality $\Theta''(x,y,0,z)=\Theta''(x,0,y,z)$ after cancellation is exactly the identity ($\mu$2).
\end{proof}

\begin{example}\label{ex:2^4k}
Let $O$ be an Onoi ring and $e\in O$ such that $e(ee)\neq0$.
Consider the Onoi ring $O^\sigma$ where $\sigma\in S_k$, $k\geq 2$ such that $\sigma(1)=2$ and $\sigma(2)=1$, and denote $e_i=(0,...,0,e,0,...,0)$ where $e$ appears at $i$-th position. 
Let $\mu$ be the canonical Onoi mapping on $O^\sigma$. Then $\mu(e_1,e_1,e_2)=e_1\cdot(0,ee,0,\dots,0)=(0,e(ee),0,\dots,0)$, but $\mu(e_1,e_2,e_1)=e_1\cdot(ee,0,\dots,0)=(0,\dots,0)$, thus violating the identity $(\mu 2)$ from Lemma \ref{l:ext_medial}. Hence, the corresponding central extension $\mathcal Q(O^\sigma,O^\sigma,\mu)$ is a non-affine latin quandle of order $|O|^{2k}$. In particular, using any of the non-zero Onoi rings from Example \ref{ex:4}, we obtain non-affine latin quandles of all orders $(4^k)^2=2^{4k}$, $k\geq2$.
\end{example}

\begin{example}\label{ex:64}
Let $O$ be an Onoi ring and $e\in O$ such that $e(ee)\neq0$. 
Consider the mapping \[ \mu:O^2\times O^2\times O^2\to O,\quad ((a,b),(c,d),(u,v))\mapsto b(du).\]
It is straightforward to verify that this is an Onoi mapping between the direct power $O^2$ and $O$. We have $\mu((0,e),(0,e),(e,0))=e(ee)$, but $\mu((0,e),(e,0),(0,e))=0$, thus violating the identity $(\mu 2)$ from Lemma \ref{l:ext_medial}. Hence, the corresponding central extension $\mathcal Q(O^2,O,\mu)$ is a non-affine latin quandle of order $|O|^{3}$. In particular, using direct powers of Example \ref{ex:4}, we obtain non-affine latin quandles of all orders $(4^k)^3=2^{6k}$, $k \geq 1$.
\end{example}

\section{Non-existence}\label{s:computation}

According to Proposition \ref{p:nilpotence}, latin quandles of order $2^k$ can be constructed from smaller ones by central extensions. Since there are no latin quandles of order 2, the extension is built over $F$, $A$ such that $|F|=2^l$ and $|A|=2^{k-l}$, for some $l\in\{2,\dots,k-2\}$. 

Which groups $A$ are admissible? Indeed, they must possess an automorphism $\psi$ such that $1-\psi$ is bijective. This disqualifies all groups   
$ \Z_{2^{k_1}} \times \Z_{2^{k_2}} \times \ldots \Z_{2^{k_n}}$
where $k_1 > k_2 \geq \ldots \geq k_n$: every automorphism $\psi$ maps the element $(1, 0, \ldots, 0)$ to another element of order $2^{k_1}$, i.e., an element $(a_1, a_2, \ldots, a_n)$ with $a_1$ odd; but then, $(1 - \psi)(1, 0, \ldots, 0) = (1 - a_1, -a_2, \ldots, -a_n)$ with $1 - a_1$ even, so $1 - \psi$ is not bijective.
The group $\Z_4 \times \Z_4 \times \Z_2$ has no admissible automorphism, too, as can be shown by a quick computer calculation.

Existence of non-affine latin quandles of order $2^k$ can be decided by the following algorithm.
\begin{itemize}
	\item[1.] for every abelian group $A$ with $|A|=2^l$, $l\in\{2,\dots,k-2\}$,
	\item[2.] \quad for every $\psi\in\aut A$ up to conjugacy such that $1-\psi$ is bijective, 
	\item[3.] \qquad for every latin quandle $F$ such that $|F|=2^{k-l}$,
	\item[4.] \qquad\quad find a generating set $\Theta$ of the group $\mathrm Z_{LD}(F,A,\psi)$,
	\item[5.] \qquad\quad if every $\theta\in\Theta$ satisfies the cocycle condition (M), answer NO; else answer YES.
\end{itemize}
In step 2., we can take only one automorphism from each conjugacy class of $\aut A$ thanks to Lemma \ref{lemma01:3}.
In step 4., we solve a set of linear equations over the group $A$ with $|F|^2$ indeterminates.

For $k\leq 7$, the only admissible groups $A$ are $\Z_4^2$ and $\Z_2^l$ with $l\in\{2,\dots,k-2\}$, and the quandles $F$ can be taken from the library of small quandles \cite{RIG}. The algorithm is straightforward for the groups $\Z_2^l$

To solve systems of linear equations over the group $\Z_4^2$, one can use the following lemma which is straightforward to prove:

\begin{lemma}
Let $A \in \Z_m^{r\times s}$. Then, $Av=0$ over $\Z_{m}$ if and only if $A^+v^+=0$ over $\Z$ where $A^+=(A\ mI_r)$ and $v^+=\left(\begin{smallmatrix} v \\ x\end{smallmatrix}\right)$ for some $x\in\Z^r$.
\end{lemma}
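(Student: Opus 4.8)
The plan is to recognize that the entire content of the lemma is the elementary dictionary between \emph{congruence modulo $m$} and \emph{genuine equality over $\Z$}: a vector of integers is $\equiv 0 \pmod m$ exactly when it equals $m$ times some integer vector. Accordingly I would first fix any integer lift of $v$, so that $Av$ and $A^+v^+$ can both be computed over $\Z$; the choice of lift is immaterial, since replacing $v$ by $v+m u$ changes $Av$ only by the multiple $m(Au)$ of $m$, and hence affects neither the validity of $Av=0$ over $\Z_m$ nor the solvability of the integer system after absorbing the change into $x$.

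Next I would unfold the augmented product using the block structure $A^+=(A\ mI_r)$ and $v^+=\vv{v}{x}$, obtaining the single identity
\[ A^+v^+ = Av + mI_r x = Av + mx \]
over $\Z$, valid for every $x\in\Z^r$. This identity is the crux, and the two implications fall out of it immediately. For the forward direction, if $Av=0$ over $\Z_m$ then every entry of $Av$ (computed over $\Z$) is divisible by $m$, so $Av=-mx$ for some $x\in\Z^r$, and for that $x$ we get $A^+v^+=Av+mx=0$ over $\Z$. Conversely, if $A^+v^+=0$ over $\Z$ for some $x$, then $Av=-mx$, whence $Av\equiv 0\pmod m$, i.e.\ $Av=0$ over $\Z_m$.

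I expect no genuine obstacle; matching the author's remark that the statement is straightforward, the only points deserving a moment's care are the placement of the existential quantifier on $x$ (the equivalence is between the mod-$m$ equation in $v$ and the \emph{solvability in $x$} of the integer equation, not a term-by-term equality) and the well-definedness across integer lifts of $v$, both disposed of by the observations above.

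Finally, for completeness I would record how the lemma is meant to be used, since that motivates its form: read in the solution-set direction, the integer solutions $v^+$ of $A^+v^+=0$ form a lattice that is computable by Hermite or Smith normal form, and projecting onto the $v$-coordinates and reducing modulo $m$ recovers precisely the $\Z_m$-solutions of $Av=0$. This is exactly what step~4.\ of the algorithm needs in order to compute a generating set of $\mathrm Z_{LD}(F,A,\psi)$ when $A=\Z_4^2$.
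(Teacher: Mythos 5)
Your proof is correct and is exactly the intended argument: the paper itself omits the proof, declaring the lemma straightforward, and your block computation $A^+v^+=Av+mx$ together with the observation that $Av\equiv 0\pmod m$ is equivalent to solvability of $Av+mx=0$ in $x\in\Z^r$ is the canonical way to fill it in. Your remarks on the choice of integer lift of $v$ and on the placement of the existential quantifier on $x$ are the right points to flag, and nothing further is needed.
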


The basis of the solution space of a set of integral linear equations can be computed by computer calculation using standard techniques. Hence, we can get also the basis of $\mathrm Z_{LD}(F,\Z_4^2,\psi)$. It is straightforward to determine if each basis element satisfies the cocycle condition (M).

It is easy to implement the algorithm in the computer system GAP. It will quickly reveal that there are no non-affine latin quandles of order $2^k$, $k\leq 5$ or $k=7$. It also shows that there are none of order $2^6$ with $A=\Z_4^2$ or $\Z_2^4$, but there are some for the groups $\Z_2^3$ and $\Z_2^2$. The table below shows all triples $(A,F,\psi)$ ($\psi$ up to conjugacy in $\aut A$) such that there is a non-affine latin quandle built by a central extension of $F$ over $A,\psi$. Quandle numbering refers to \cite{RIG}.

\medskip
\begin{center}
\begin{tabular}{|l||l|l|l|l|} \hline
	$F$ & 8\_2 & 8\_3  & 4\_1 $\times$ 4\_1 & 16\_$n$, $n=1,2,4,5,\dots,9$ \\[1mm]\hline
	$A$ & $\Z_2^3$ & $\Z_2^3$  & $\Z_2^2$ & $\Z_2^2$ \\[1mm]\hline
$\psi$ & $\begin{pmatrix}
    1 & 0 & 1\\
    1 & 1 & 1\\
    0 & 1 & 1
\end{pmatrix}$ & $\begin{pmatrix}
    1 & 0 & 1\\
    1 & 1 & 0\\
    0 & 1 & 0
\end{pmatrix}$ & $\begin{pmatrix}
    1 & 1\\
    1 & 0
\end{pmatrix}$  & $\begin{pmatrix}
    1 & 1\\
    1 & 0
\end{pmatrix}$ \\ \hline
\end{tabular}
\end{center}
\medskip

The computations took approximately 2 days of computation time.

The algorithm could be extended to enumeration of non-affine latin quandles of order $2^k$: in step~6., we would filter surviving cocycles up to isomorphism of the corresponding extensions. We tried to adapt some of the isomorphism checking techniques that were used in other projects (such as \cite{DV}), but the dimension of the cocycle space seems to be too large to succeed in the enumeration.

\end{document}